\newtheorem{thm}{Theorem}[section]
\newcommand{\R}{{\rm I}\kern-0.18em{\rm R}}
\newcommand{\1}{{\rm 1}\kern-0.25em{\rm I}}
\newcommand{\E}{{\rm I}\kern-0.18em{\rm E}}
\newcommand{\p}{{\rm I}\kern-0.18em{\rm P}}
\author{Lev B Klebanov\footnote{Department of Probability and Statistics, MFF, Charles University, Prague-8, 18675, Czech Republic, e--mail: levbkl@gmail.com},  Irina Volchenkova\footnote{Department of Probability and Statistics, MFF, Charles University, Prague-8, 18675, Czech Republic, e--mail: i.v.volchenkova@gmail.com}}
\title{Heavy Tailed Distributions in Finance: Reality or Myth? Amateurs Viewpoint. }
\date{}
\begin{document}
\maketitle

\begin{abstract}

The purpose of this paper is to show that the use of heavy-tailed distributions in financial problems is theoretically baseless and can lead to significant misunderstandings. The reason for this the authors see in an incorrect interpretation of the concept of the distributional tail. In accordance with this, in applications it is necessary to use instead of the distributional tail the "smearing" of its central part.

\noindent
{\bf keywords}: heavy tailed distributions, exponential tails, pre-limit theorems, financial indexes
\end{abstract}

The authors are not yet experts in the field of finance. However, we are interested in the use of probabilistic methods in various areas, including the financial Economics. It seems to us that the use of some concepts of probability theory in financial mathematics is baseless or, at least, insufficiently substantiated. In this article we are talking about the use of heavy-tailed distributions. Let us describe our doubts in more details.

Let us start with commonly offered arguments in favour of using heavy-tailed distribution in the description of the stochastic noise, which arises in the study of changes in financial indices and/or prices. Straight away we will say, from our point of view these arguments do not confirm the presence of heavy tails. But let us explain step by step.

\section{First argument}\label{sA1}
\setcounter{equation}{0}

The first argument usually arises when considering some of the time series, such as the Dow Jones Industrial Average index (say, for the interesting Period from the 3 January 2000 to 31 December 2009), daily ISE-100 Index (November 2, 1987 - June 8, 2001) and many others (see, e.g., \cite{EK}, \cite{BMW}). The observed fact is that quite a lot of data not only fall outside the 99\% confidence interval on the mean, but also outside the range of $\pm 5\; \sigma$ from the average, or even $\pm10\;\sigma$. On the assumption of this circumstance we can make two conclusions.

First (and absolutely correct) conclusion consists in the fact, that the observations under assumption of their independence and identical distribution are in contradiction with their normality.

The second conclusion is that the distribution of these random variables is heavy-tailed. This decision is not based on any mathematical justification. Indeed, the first thing that comes to mind in this situation is to try to apply the Chebyshev`s inequality to the random variables with non-normal distributions (such way of proof that random variables don`t follow normal distribution is given in \cite{BMW}). An exact inequality can be found in the book  Karlin and Studden (\cite{KS}). There is also shown distribution, when inequality becomes equality. To quote the corresponding result: 
{\it Gauss Inequality. In this example we desire to determine 
the maximum value of 
\[ \p\{X \in (-\infty, \mu - d] \cup [\mu + d, \infty) \}, \;  d > 0 \]
over the class of unimodal distribution functions with mode and mean 
located at $\mu$ and variance equal to $\sigma^2$. 
The solution for the case $d^2 \geq 4 \sigma^2 /3$ is  given by 
$4 \sigma^2 /9 d^2$, with rectangular part of the distribution on interval $[\mu-3 d/2, \mu + 3 d/2]$ plus 
mass at $\mu$. By a rectangular distribution on $[a, b]$ we mean a  distribution $F$ whose density is 
$1/(b-a)$ for $ x\in (a,b)$ and $0$ otherwise. }

Let us apply this result to the studied case supposing that the distribution is unimodal with finite variance (e.g., $\sigma =1$), i.e., not a heavy-tailed distribution. We choose $\mu=0$, $\sigma =1$ and $d=10 \sigma =10$. From the earlier mentioned follows
\[  \p \{ |X| >10 \} \leq \frac{1}{225},  \]
and the equality is reached for the above-written distribution.
It is clear that the probability of $1/225$ is not so small. Exactly in the case of a sample  size  of 50000 average number of exceeds of level $10 \sigma$ is more than 222 times. Notice that  samples of 50000 are not uncommon in financial problems. Moreover, in sample such as this with $d=40\sigma =40$, an average number of exceeds of level $40\sigma$ will be somewhat more 13.8. It is clear, that in this case we are not talking about the heavy tail ($\sigma =1$ !). Thus the first argument of the appearance of heavy-tailed distributions is rejected.

Of course, used extremal distribution does not seem to be natural for describing the fluctuations of financial indices. Especially strange looks existence of the mass at $\mu$. Therefore, we will give an example  of continuous distribution, which seems to be more natural in this case. In addition, this distribution will give a counterexample to the other arguments about the presence of heavy tails in financial problems.

Let $Y$  be a random variable with a gamma distribution with the shape parameter $1/m$ and the scale parameter $m$, where $m$ is the whole positive number. Assume $Y_1, Y_2$ are two independent variables and follow the \\ distribution of $Y$. Define $X$ as $Y_1-Y_2$, this is the random variable we need. It follows symmetrized gamma distribution with the characteristic function $1/(1+mt^2)^(1/m)$
and the probability density function
\[  p_m(x) =\frac{2^{1/2-1/m} |x|^{1/m-1/2}}{\sqrt{\pi} \Gamma(1/m)m^{(2+m/(4m))}}K_{1/m-1/2}(|x|/\sqrt{m}), \]
where $\Gamma(z)$ is the gamma function, and $K_{\nu}(z)$ is the modified Bessel function of the second kind. 

It is easy to see, that this distribution has an exponential tail and hence there exist finite moments of all orders. The variance of this distribution is equal to  two (i.e., $\sigma = \sqrt{2}$) for all  $m>0$. Notice that this distribution has an interesting property of $\nu$-normality, i.e., it plays the role of a normal distribution for the summation of a random number of random variables (see \cite{KKRT}). This will be discussed in more details later. 

Table \ref{tab1} gives the probability of deviations of the random variable $X$ larger than $10 \sigma$ from the average. We can see this probability incleases with $m$.

\begin{table}
\caption{ Probabilities of deviations larger than $10 \sigma$}\label{tab1}
\begin{center}
\begin{tabular}{|l|l|l|l|}
\hline
m & p & m & p\\
\hline
10 & 0.0000589843& 60 & 0.000757375\\
\hline
20 &  0.000230141& 70 & 0.000833146\\
\hline
30 & 0.000401799 & 80 &   0.000894442\\ 
\hline
40 &  0.000546297 & 90 & 0.000944249\\
\hline
 50 &  0.000663305 & 100 & 0.000984872\\
\hline
\end{tabular}
\end{center}
\end{table}
  
\section{Second argument}\label{sA2}
\setcounter{equation}{0}

The second argument is that the observed kurtosis exceeds 3 and sometimes quite significantly (see. \cite{BT, DuPe}). From our point of view this argument sounds strange. Indeed, it is known (see e.g., \cite{Rao}), that the asymptotic dispersion of empirical moments depends on the higher-order moments, which typically have high values. Thus for better approximation of the theoretical (general) moments, it is necessary to have a large number of the observations.  Therefore, the high value of empirical kurtosis does not essentially mean that the theoretical kurtosis is anomally high or infinite.

However, it seems more important  for us that for infinitely divisible distributions the kurtosis either does not exist, or is at least three, and equality holds only in case of normal distribution. This fact was probably known by the experts, but we could not have found where and who it had been published by, therefore give its exact formulation  and proof in the appendix. Since financial indices are related to the sums of a large number of random variables, the assumption of infinite divisibility of the corresponding distribution sounds quite natural. And then an observed fact is simply in agreement with this assumption.  And it does not carry any further information on the existence of heavy tails.

Let us see how kurtosis depends on the parameter $m$ of the symmetrized gamma distribution. It is easy to calculate that the kurtosis is expressed by the formula $\kappa =3(1+m)$.
We see that with an increase of $m$, the kurtosis increases. However, as it has been already noted, the tail of the distribution is not heavy. Thus, the second argument also does not prove the presence of heavy tails of the corresponding distribution.

\section{Third argument}\label{sA3}
\setcounter{equation}{0}

The third argument \cite{Vid2010} is that for values of $x$ from some broad interval a ratio $\p\{X>x\}/\p\{X>1.5 x\}$  for the empirical distribution of a financial index is close to a constant. This argument, of course, would speak about the tails weight, if it was for all sufficiently large $x$ and the theoretical distribution of the random variable $X$. However, the tails of the empirical distribution vanish from some point, so it is not clear what values should be considered "sufficiently large". If just consider some interval, the above property may be valid for distributions without heavy tails. For example, if the variable $X$ is uniformly distributed on the interval $(-A,A)$ with a large $A>0$, then the specified  ratio is constant at $x,\; 1.5x \in (-A,A)$ for the theoretical distribution.

On the picture \ref{fig1} there is given a plot of the ratio  $\p\{X>x\}/\p\{X>1.5 x\}$ for symmetrized gamma distribution, $m=50$, over interval $(1, 50)$.
From the plot it is clear, that this ratio is almost a constant on the wide interval. Let us remind, that the variance $\sigma^2$ of the explored distribution is equal to 2, so that the length of the given interval  is about $35\sigma$. Here again we speak about the theoretical distribution. For empirical distribution it is not clear, what length of the interval is  sufficient, and thus, what sample size is required.

As two previous arguments, the third one is not an argument  in favour of the heavy tails.

\begin{figure}[h]
\centering
\hfil
\includegraphics[scale=0.5]{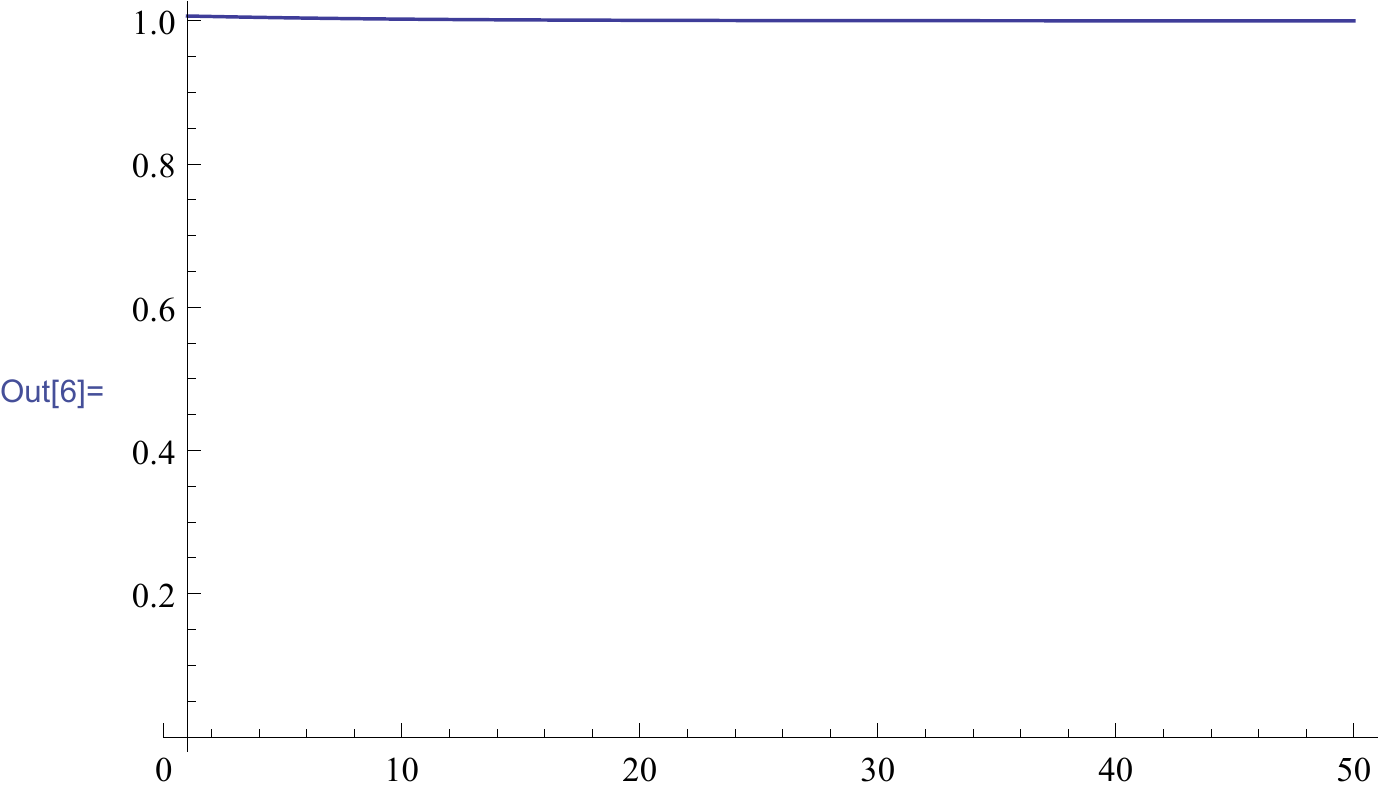}
\caption{Plot of the ratio  $\p\{X>x\}/\p\{X>1.5 x\}$ for symmetrized gamma distribution, $m=50$, over interval $(1, 50)$ }\label{fig1}
\end{figure}

\section{Fourth argument}\label{sA4}
\setcounter{equation}{0}
The fourth argument is that the evaluation of the index of the tail, for example,  Hill  estimator and/or its modifications, give a value between 1 and 5 \cite{ChDLNB}, i.e. power law tail. However, this argument does not take into account several factors. First of all, any statistical evaluation of the tails index requires a large and unknown number of the observations. Although it is a consisten estimator, but for a given accuracy the number of observations can not be uniformly estimated in principle on a large enough class of distributions. 

Indeed, we can not know anything about the behavior of the tail after receiving the maximal value of the observation, i.e. this behavior can be anything. The hope is that we still have a sufficient number of observations simply fading away for the case where the value of estimate is less than 2. 

In fact, in this case the variance of the theoretical distribution is infinite, which means that it must be large for the empirical distribution. However, in examples of the financial indices an empirical variance is mostly around 1, i.e,, not large. This fact, of course, does not talk about the existence of the finite variance of the theoretical distribution, but says that the sample size is insufficient for the construction of a "good" variance estimation. So why the sample of this size can be sufficient for receiving a good estimate of the index of the tail? We have never seen the arguments on this subject.

We checked out our arguments by simulating samples from symmetrized gamma distribution with parameter $m=10$. The sample size was $n=10000$. The parameter $k(n)$ of the Hill  estimator was selected in three ways:
$k(n)=\sqrt{n}$, $k(n) = n^{2/3}$ and $k(n)=n^{4/5}$. Average value (for hundred simulations) of the Hill estimator were, respectively, $0.37$, $0.65$ and $1.39$. All these values, which should indicate the presence of very heavy tail, of course, do not reflect its exponential character for symmetrized gamma distribution.
Thus, we reject fourth argument in favour of heavy-tailed distributions.

It is necessary to keep in mind, that the results of asymptotic behavior of Hill's estimator (and in general of all the estimators of indices of tail distribution) assume that random sample follows the distribution with power tail. What result can the one expect, if this assumption is not satisfied? Apparently, anything, because there is no results regarding this case and can not be, since the tail can change its behavior from the very far point. The values of Hill`s estimation can be similar to the expected values of heavy-tailed distribution, but it does not prove anything and the tail can be not heavy. It should be noted that even newer and more accurate estimates of the index (see e.g., \cite{NgSam}) does not prove the presence of a heavy tail, as the behavior of these estimates in the "alternative" distribution is unknown.

Thus, we reject fourth argument in favour of heavy-tailed distributions.

\section{Fifth argument}\label{sA5}
\setcounter{equation}{0}

The fifth argument is that the type of distribution of financial indices should be associated with sums of a large number of random variables, and thus should be close to the type of one of the limiting distributions. According to the central limit theorem, under specific and well-known conditions, the limiting distribution may be either normal or stable with the stability parameter $\alpha \in (0, 2)$. The normal distribution has to be rejected in accordance with the first argument (and we agree with this), and then there is left just some of the stable distributions. All stable non-normal distributions have heavy tails.

Sometimes the fifth argument is a little bit modified: instead of referring to the central limit theorem, the property of stability itself is used, i.e. the fact of the similarity of the distributions of big and smaller sums of random variables.

However, again we find this argument insufficiently and not-well sounded. In fact, when using  the central limit theorem in its simplest form, it is necessary to have the sum of independent identically distributed random variables. This sum must be properly centered and standardized. Of course, there exist modifications for cases of differently distributed and (weakly) dependent components. But it is hard to check out these "generalized" conditions for empirical data.  It is also not clear why the components, that make up the index values, are identically distributed and independent. It is not clear, what role plays standardization. From the practical point of view, it is not necessary to answer these questions, but consider the limit theorem only as a hint that facilitates the choice of the distribution, and then check, how it corresponds with the reality. However, this point of view leads us rather to the normal distribution than to stable. Why is that? The explanation is simple. Any financial values are bounded in view of the fact that in the world there is a large, but finite amount of money. And it is not clear, why the heavy-tailed distribution should be a good approximation.  When applying the limit stable convergence theorem it is necessary the components were heavy-tailed distributed themselves. Why can they arise in problems with bounded random variables? Examples of such distributions have already arisen in physics, but not in finances.

We believe that the references to the limit theorems on confirmation of the empirical data should be treated carefully. Here is an example of the completely wrong conclusions as a result of the inattentive analysis. We were simulating 1000 samples of  size $n=10000$. Each sample followed symmetrized gamma distribution with parameter $m=100$. For each sample was calculated the sum of the random variables and then each sum was standardized with respect to  $1/n^{1/1.83}$,  i.e. as in the case of the stable distribution with parameter $\alpha = 1.83$. It is obvious, that the limit theorem is not applicable in this situation, because the observations do not belong to the domain of attraction of an $\alpha$-stable distribution. However, the empirical distribution function of the standardized sums is very similar to a symmetric stable distribution with parameters  $\alpha = 1.8$ and $\sigma=6$. The one can verify it by looking at the chart \ref{fig2}.

\begin{figure}[h]
\centering
\hfil
\includegraphics[scale=0.5]{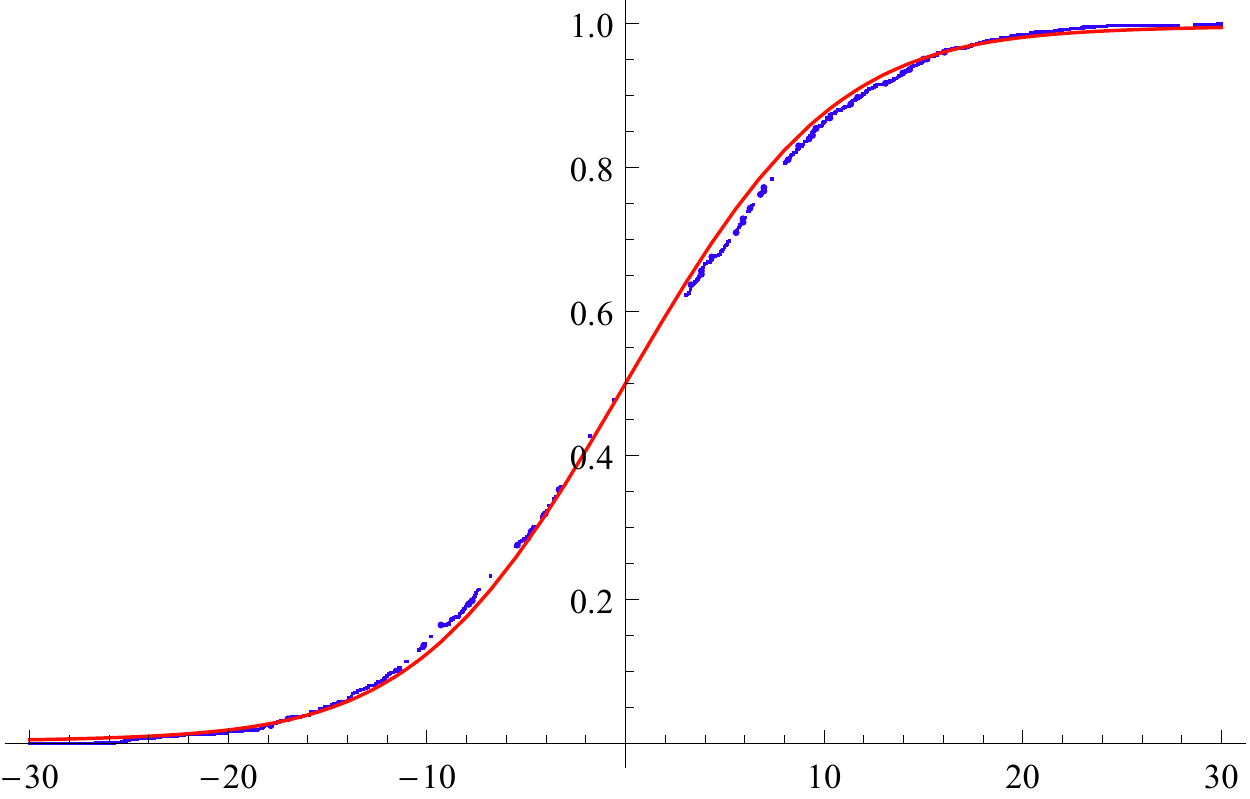}
\caption{Plot of empirical CDF of normalized sum (blue) versus CDF of symmetric stable distributions, $\alpha = 1.8$ and $\sigma=6$ (red)}
\label{fig2}
\end{figure}
The reason for this similarity consists in the fact, that we took into account only distributions behavior around the center area, where they are very similar, but did not take into consideration the "far" tails behavior (where distribution functions vary greatly). In this case the corresponding sums behavior can be described by the pre-limit theorem, not by limit theorem \cite{KRSz}.

In our opinion, it is reasonable to refer to the limit results of random sums of random variables, but not to the central limit theorem. In fact, the number of transactions per unit of time on the market is a random variable. The number of stocks in a portfolio is also a random variable.  Notice that the analogues of normal and stable random variables were implemented and studied in \cite{KMM84}, \cite{KMM87},\cite{KR96}, \cite{KKRT}. These distributions differ from the classic stable distributions, but have the property of stability (in which instead of the sum of random variables is used the sum of a random number of components). There also exist analogues of the corresponding  limit theorems. In particular, as it has already been noted, symmetrized gamma distribution is an analogue of a normal distribution. Let us state this fact accurately (see \cite{KKRT}, \cite{KKozR}).

Let $X_1, X_2 \ldots $ be a sequence of independent identically distributed random variables. Assume that $\{ \nu_p, \; p\in (0,1)\}$ is a family of random variables, independent of the $X_1, X_2 \ldots $  with the probability generating function 
\begin{equation}\label{eqGF}
{\mathcal P}_p(z,m) =\frac{p^{1/m}z}{(1-(1-p)z^m)^{1/m}},
\end{equation}
where $p\in (0,1)$, and $m$ is positive whole-number parameter. We have $p = 1/\E\nu_p$. Thus, if the sum $S_{p}= \sum_{j=1}^{\nu_p}X_j$ includes in average many components, then  $p$ is small. We say, that the random variable $X_1$ strictly follows $\nu$-normal distribution, $\E X_1 =0$ and
\[ X_1 \stackrel{d}{=}p^{1/2}\sum_{j=1}^{\nu_p}X_j \]
for all $p \in (0,1)$.
The symbol $ \stackrel{d}{=}$ means equality of distributions. Of course, symmetrized gamma distribution is the strictly  $\nu$-normal distribution and it holds the next limit theorem (see \cite{KKRT}).

\begin{thm}\label{thLim}
Let $Y_1, Y_2, \ldots$ be a  sequence of independent identically \\distributed random variables. Assume $\E Y_1=0$, $\E Y_1^2< \infty$, and defined above family $\{ \nu_p, \; p\in (0,1)\}$ does not depend on $Y_1, Y_2, \ldots$. Let $S_{p}= \sum_{j=1}^{\nu_p}Y_j$. Then
\[ \lim_{p \to 0}\p\{S_p <x\} = \p\{X_1<x\}, \]
where the random variable $X_1$ has a strictly $\nu$-normal distribution.
\end{thm}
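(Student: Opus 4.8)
The natural tool here is the characteristic function together with L\'evy's continuity theorem; the whole proof is a transfer (random-sum) argument in which the generating function $\mathcal{P}_p$ does the work. Throughout I normalize the sum by $p^{1/2}$, exactly as in the definition of $\nu$-normality, so that the object of interest is $p^{1/2}\sum_{j=1}^{\nu_p}Y_j$; this scaling is essential, since $\E\nu_p=1/p\to\infty$ and without it the limit would be degenerate. Writing $f(t)=\E e^{itY_1}$ and conditioning on $\nu_p$ (which is independent of the $Y_j$ and has probability generating function $\mathcal{P}_p(\cdot,m)$), I would obtain
\[
\E\exp\Bigl(it\,p^{1/2}\sum_{j=1}^{\nu_p}Y_j\Bigr)=\mathcal{P}_p\bigl(f(p^{1/2}t),m\bigr)=\frac{p^{1/m}f(p^{1/2}t)}{\bigl(1-(1-p)f(p^{1/2}t)^m\bigr)^{1/m}}.
\]
Before using this I would verify that the closed form of $\mathcal{P}_p$ is legitimate at the complex argument $z=f(p^{1/2}t)$: since $|z|\le 1$ the series $\sum_k\p\{\nu_p=k\}z^k$ converges, and $\mathrm{Re}\,(1-(1-p)z^m)\ge 1-(1-p)=p>0$, so $1-(1-p)z^m$ stays in the open right half-plane where the principal branch of the $1/m$ power is analytic. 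Hence the closed form agrees with the power series and no branch ambiguity arises.

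Next I would insert the second-order expansion of $f$. Because $\E Y_1=0$ and $\sigma^2:=\E Y_1^2<\infty$, we have $f(s)=1-\tfrac12\sigma^2 s^2+o(s^2)$ as $s\to0$, hence $f(p^{1/2}t)=1-\tfrac12\sigma^2 p\,t^2+o(p)$ for fixed $t$. A short computation gives $f(p^{1/2}t)^m=1-\tfrac{m}{2}\sigma^2 p\,t^2+o(p)$ and therefore
\[
1-(1-p)f(p^{1/2}t)^m=p\Bigl(1+\tfrac{m}{2}\sigma^2 t^2\Bigr)+o(p).
\]
Substituting this together with $f(p^{1/2}t)\to1$ into the quotient, the two factors $p^{1/m}$ cancel and the limit reads
\[
\lim_{p\to0}\mathcal{P}_p\bigl(f(p^{1/2}t),m\bigr)=\Bigl(1+\tfrac{m}{2}\sigma^2 t^2\Bigr)^{-1/m},
\]
which is precisely the characteristic function of the symmetrized gamma law, i.e. of a strictly $\nu$-normal variable $X_1$ (the scale being fixed by $\sigma^2$; for $\sigma^2=2$ one recovers $(1+mt^2)^{-1/m}$). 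As this limit is continuous at $t=0$, L\'evy's continuity theorem promotes the pointwise convergence of characteristic functions to convergence in distribution, and since the limit law is continuous the conclusion $\p\{S_p<x\}\to\p\{X_1<x\}$ holds for every $x$.

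The genuinely routine parts are the Taylor expansion and the bookkeeping of the $o(p)$ remainders. The point that I would treat as the main obstacle is the interplay between the fractional powers and these error terms: one must confirm that the closed form of $\mathcal{P}_p$ may be evaluated at the complex value $f(p^{1/2}t)$ on the correct branch (the half-plane estimate above is what secures this) and that the remainders inside $(\,\cdot\,)^{1/m}$ are controlled well enough that they still vanish after division by $p^{1/m}$. Once that control is in place the limit computation is forced, and the only structural hypotheses actually used are the centering $\E Y_1=0$ (which kills the linear term that the $p^{1/2}$ scaling would otherwise promote to leading order) and the finite second moment.
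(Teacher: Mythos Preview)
The paper does not actually supply a proof of this theorem; it is stated with a citation to \cite{KKRT} and then used. Your argument via characteristic functions and the probability generating function of $\nu_p$ is the standard transfer technique for random-sum limit theorems and is correct; it is also the approach one would expect the cited source to take.

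One point worth noting: you correctly observe that the statement as printed omits the normalization $p^{1/2}$ on $S_p$. Since $\E\nu_p=1/p\to\infty$, the unnormalized sum cannot converge to a nondegenerate limit, and the definition of strict $\nu$-normality given just above the theorem explicitly carries the factor $p^{1/2}$. Your decision to work with $p^{1/2}S_p$ is therefore not an embellishment but a necessary repair of a typo in the statement. The expansion $1-(1-p)f(p^{1/2}t)^m=p\bigl(1+\tfrac{m}{2}\sigma^2 t^2\bigr)+o(p)$ and the subsequent cancellation of $p^{1/m}$ are accurate, and the attention to the branch of $(\,\cdot\,)^{1/m}$ via the half-plane estimate $\mathrm{Re}\bigl(1-(1-p)z^m\bigr)\ge p>0$ is the right way to justify evaluating the closed form of $\mathcal{P}_p$ at a complex argument.
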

This theorem plays the role of the central limit theorem for random sums of random variables, when the number of components has a distribution with probability generating function (\ref{eqGF}). We have already seen that the symmetrized gamma distribution quite corresponds to all the previous arguments, though it does not have heavy tail. It also corresponds to the fifth argument, if use the central limit theorem in expanded form. The advantage also consists in the fact, that there is no need components to have heavy tails.

\section{The last sixth argument}\label{sA6}
\setcounter{equation}{0}

The argument is that the use of stable distributions, or hyperbolic distributions, or other distributions with heavy tails fits well the real data (see articles in \cite{Handbook} and also \cite{GU}, where the reasons to use hyperbolic, but not stable distributions, are given). Partially, it is true. But it is well known, that a good fit does not guarantee the accuracy of the model used. Historical examples of this type are known. Really, heliocentric model of the solar system, at the very beginning of its development, or the Ptolemaic model. At that time, the second one described astronomical events more accurately comparing to the first one. However, this did not prevent the heliocentric model prevail. In addition, we can see, that there arise modifications of stable distributions, for instance, tempered stable distributions \cite{Ros2007, RosSin,BMW}. These distributions have exponential tails and give a better fit than stable. However in this case, the better fit can easily be explained by a large number of parameters. In addition, it makes sense to modify the model, which explains some  basic reality facts, but for some other needs to be clarified. 

From all the arguments above, it follows that the model with heavy tails does not explain the fundamental phenomena and facts, so it is not clear, if such a model should be modified, or it`s more correct to propose new ones. In fact, it is not even clear, what heavy-tailed models should be chosen for modification. After all there are used  models of stable distributions, hyperbolic distributions, distributions of extreme values, and some others. The goodness of fit of the proposed symmetrized gamma distribution model is not checked yet on the real data, but we think it deserves such a verification.

Note, that a very critical point of view of the arguments about an existence of agreement with already observed data without adequate abilities to predict, has already been expressed with regard to financial problems in \cite{BBPZ}.

\section{Our offer and final remarks}\label{sC}
\setcounter{equation}{0}
At all the written above is possible (and expected) simple reaction: "All these arguments are only theoretical and have a critical nature. But we must somehow process the financial data in the absence of normality. There are offered many models with heavy tails, and they often work. And you do not offer anything." This reaction is partially correct, because we have not yet conducted an analysis on the compliance between the proposed model and the real data. However, an alternative model itself is offered. It is symmetrized gamma distribution. Below we present some of its properties and explicitly give some more theoretical arguments in its defense.

We will only consider the symmetric case. This does not mean that the model can not be simply replaced by a more general. We will present one of the possible not symmetrized versions lower, but will not study it.

So in the symmetric case we offer the family of distributions with the characteristic functions 
\begin{equation}\label{eqG}
f(t,m)=\frac{1}{(1+m t^2)^{1/m}},\; \;t \in \R^1,
\end{equation}
where $m>0$ is a parameter. This is the characteristic function of symmetrized gamma distribution in standardized form. 
Exactly its expected value and dispersion are equal to zero and resp. to two. Of course, it is possible to implement a location parameter and a scale parameter, but we will study the standard case.

As it has already been mentioned in the section \ref{sA5}, the distribution with characteristic function (\ref{eqG}) in the case of whole positive $m$ is an analogue of the normal distribution, when summarizing the random number of independent identically distributed random values.\footnote{The similarity with the property of stability}. Of course, this characteristic function for any $m>0$ is analytic in the strip $|Im t|< 1/\sqrt{m}$, and thus has exponentially decaying tails, and, therefore, the finite moments of all orders \footnote{difference from the distributions with heavy tails}. 
In an obvious way, the corresponding distribution is infinitely divisible. \footnote{i.e. it can be considered as distribution of the sum of arbitrarily large number of independent variables}. It also clear, that its kurtosis $ \kappa> 3 $ for all $ m> 0 $. 

It is easy, however, to write an exact expression for $ \kappa $. That is, $\kappa =3(1+m)$. Is it arbitrarily large with appropriate $m$. \footnote{Again the similarity with what  was expected from the heavy-tailed distributions} Some properties have already been noted in the previous section. We note some new facts that explain similarities with heavy-tailed distributions. 

First of all, let us notice one very usable property of the distribution with the characteristic function (\ref{eqG}). If $X_1, X_2, \ldots , X_n$ are independent identically distributed random variables with the characteristic function (\ref{eqG} and
\begin{equation}\label{eqS}
S_n=\frac{1}{\sqrt{n}}\sum_{j=1}^{n}X_j, 
\end{equation}
then the characteristic function of $S_n$ has form
\begin{equation}\label{eqCFS}
 f^{n}(t/\sqrt{n})= f(t, m/n). 
 \end{equation}

It shows, that the summation of independent identically distributed random variables with the distribution (\ref{eqG}) leads to the change of the parameter $m$ of this distribution.
The specified property also somewhat similar to the property of the stability, although it is not stability property itself. We also note that for all $m>0$ and $t\in \R^1$ is satisfied an inequality
\begin{equation}\label{eqPar}
f(t,m) \geq \exp\{-t^2\} 
\end{equation}
with equality only for $t=0$. However it is clear, that $f(t,m) \to \exp\{-t^2\}$ with $m \to \inf$ by the central limit theorem. On the other hand, the ratio (\ref{eqCFS}) shows, that similarity between the distribution of sum (\ref{eqS}) and normal distribution depends on the quotient $n/m$.

In the paper \cite{KRSz} was shown, that the behavior of the characteristic function on the interval $(\delta, \Delta)$, where $\delta >0$ is small, defines the behavior of distribution of the sums of independent random values with the large number of summands $n$ depended on the $\delta$ and $\Delta$. Moreover, if the characteristic function is close to the stable on this interval, then the distribution of the sum is also close to the stable for slightly high $n$ (the upper bound of the interval depends on $\delta$ and $\Delta$). Let us try to find the characteristic function which has the following form $\exp\{-\lambda |t|^{\alpha}\}$ (i.e. stable) and for $t \in (\delta, \Delta)$ satisfies
\begin{equation}\label{eqIn}
\frac{1}{(1+mt2)^{1/m}} > \exp\{-\lambda t^{\alpha}\}> \exp\{-t^2\}. 
\end{equation}
In this case the stable distribution fits the "middle" tail of the symmetrized gamma distribution better than normal. Now show that this choice of the parameters $\lambda >0$ and $\alpha \in (0,2)$ is always possible. Really, the inequality (\ref{eqIn}) is equivalent to the 
\begin{equation}\label{eqIn2}
\frac{\log(1+mt^2)}{m t^2} < \lambda t^{\alpha-2} <1,
\end{equation}  
with $t \in (\delta, \Delta)$. It is easy to see, that the function $\log(1+u)/u$ decreases for $u>0$, and more, it converges to the 1 with $u \to 0$.Therefore, for $t=\delta$ it is smaller than 1, and we can satisfy the inequality (\ref{eqIn2}) choosing $\lambda>0$ small enough at the point $t=\delta$. Obviously, it will be satisfied for $t>\delta$. 

We calculated estimations of the parameters $\lambda$ and $\alpha$ of the function (\ref{eqCFS}) with $m=20$, values $n$, from 1 to 100 with step size of 10 and $\delta=0.005$, $\Delta=0.5$. They results are given in the table \ref{tab3}.

\begin{table}
\caption{Values of stable distribution parameters $\alpha$ and $\lambda$}\label{tab3}
\begin{center}
\begin{tabular}{|l|l|l|}
\hline
n & $\alpha$ & $\lambda$ \\
\hline
1 &1.26906 & 0.226565\\
\hline
10& 1.80697 & 0.720949\\
\hline
20 & 1.89192& 0.835951\\
\hline
30 & 1.92487 & 0.883786\\
\hline
40 & 1.94241 & 0.910016\\
\hline
50 & 1.9533 & 0.926583\\
\hline
60 & 1.96073 & 0.937998\\
\hline
70 & 1.96612 & 0.946341\\
\hline
80 & 1.97021 & 0.952704\\
\hline
90 & 1.97341 & 0.957718\\
\hline
100 & 1.976 & 0.961771\\
\hline
\end{tabular}
\end{center}
\end{table}

These values can be interpreted as follows. For a sample of size $n$ from the distribution (\ref{eqG}) the characteristic function (and, hence, the distribution of the standardized sum (\ref{eqS}) is fitted by the stable distribution with parameters specified in the table better than with the normal distribution. This approximation was close to the optimal. This implies a very interesting conclusion. It consists in the fact that the "extension" of the data, i.e. the replacement with sums of some numbers of observations, the parameter $\alpha$ (parameter of the distribution used for fit) grows (which would not occur in the case of stable distribution). With regard to the financial data, it would mean that the estimates of the tail index from data obtained at a short and some longer time intervals  must be different. This difference actually exists. Attempt to explain it on the basis of pre-limit theorem has been made in the work \cite{GrSam}.

As it has been mentioned, one can implement asymmetrical versions of the above-studied distribution (\ref{eqG}). These versions differ in the way of centering the sum of the random number of random values (\ref{eqS}). The first one consists in centering each summand in the sum separately and in the second way the whole sum is centered. Two methods can be applied simultaneously. We will not discuss the details of the corresponding methods, in our opinion, it should be determined separately in each specific task.

Note also, that the fact of $\nu$-normality of the considered distribution means that it can be viewed as dispersion-based mixture of the normal distributions. Mixture distribution is the gamma distribution. In other words, a random variable following gamma symmetrized distribution can be considered as a random choice from the normal sample with random variance.

Unfortunately we can not yet give final answer to the question, posed in the title. However, we believe, that the presence of the heavy tails in finances is myth. The concept of distribution tail itself should be modified in applied researches. It seems, that it should be replaced with the behavior of characteristic function on the interval $(\delta, \Delta)$ or with the properties of distant part of the distribution center.

\section{Appendix}
\setcounter{equation}{0}

\begin{thm}\label{thA}
Let assume that $X$ is infinitely divisible random variable with a finite moment of the fourth order $\mu_4$. Let $\kappa = \mu_4/\mu_2^2$ be kurtosis of the random variable $X$. Then
\[ \kappa \geq 3 \] and equality occurs if and only if $X$ has a normal distribution.
\end{thm}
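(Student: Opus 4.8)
The plan is to pass to the L\'evy--Khintchine (Kolmogorov) canonical representation of the characteristic function and read the cumulants off from it. Since $X$ is infinitely divisible its characteristic function has no zeros, so we may write
\[
\log \E e^{itX} = i\gamma t - \tfrac12 \sigma^2 t^2 + \int_{\R}\bigl(e^{itx}-1-itx\bigr)\,\nu(dx),
\]
where $\sigma^2 \ge 0$ is the Gaussian variance and $\nu$ is the L\'evy measure (with $\nu(\{0\})=0$). The hypothesis $\mu_4 < \infty$ enters through the standard correspondence between moments of an infinitely divisible law and the tails of its L\'evy measure: finiteness of the moment of order $n$ is equivalent to $\int_{|x|>1}|x|^n\,\nu(dx)<\infty$. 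Granting this, all integrals $\int_{\R} x^k\,\nu(dx)$ for $2\le k\le 4$ converge absolutely, the centered form of the representation above is legitimate, and one may differentiate under the integral sign.

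Next I would extract the cumulants $c_2,c_3,c_4$. Expanding $e^{itx}-1-itx=\sum_{k\ge 2}(itx)^k/k!$ and matching with $\log \E e^{itX}=\sum_{k\ge 1} c_k (it)^k/k!$ gives at once $c_2=\sigma^2+\int_{\R} x^2\,\nu(dx)$ and, for $k\ge 3$, $c_k=\int_{\R} x^k\,\nu(dx)$. In particular $c_4=\int_{\R} x^4\,\nu(dx)$.

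The heart of the argument is the classical identity expressing the fourth central moment through cumulants, $\mu_4 = c_4 + 3 c_2^2$ (cumulants of order $\ge 2$ being translation invariant, this holds irrespective of $\E X$). Dividing by $\mu_2^2 = c_2^2 > 0$ yields $\kappa = \mu_4/\mu_2^2 = 3 + c_4/c_2^2$. Since $\nu$ is a \emph{nonnegative} measure and $x^4 \ge 0$, we have $c_4 = \int_{\R} x^4\,\nu(dx) \ge 0$, whence $\kappa \ge 3$.

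For the equality case, $\kappa = 3$ forces $c_4 = \int_{\R} x^4\,\nu(dx)=0$. As the integrand is strictly positive off the origin and $\nu$ charges no atom at $0$, this means $\nu \equiv 0$. The representation then collapses to $\log \E e^{itX} = i\gamma t - \tfrac12\sigma^2 t^2$, i.e.\ $X$ is Gaussian (necessarily non-degenerate, since $\mu_2 = \sigma^2 > 0$); conversely the normal law has $\kappa = 3$. I expect the only nontrivial point to be the moment/L\'evy-measure correspondence invoked in the first step --- everything after it is purely algebraic --- so I would either cite it as standard or justify it directly by differentiating the characteristic function four times at the origin and checking that the resulting integrals converge.
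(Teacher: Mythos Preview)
Your proof is correct and follows essentially the same route as the paper's: both pass to the canonical representation of the log-characteristic function, identify the fourth cumulant as a nonnegative integral (your $\int x^4\,\nu(dx)$ is the paper's $\int x^2\,dK(x)$ under the Kolmogorov form $dK(x)=x^2\,\nu(dx)+\sigma^2\delta_0$), and conclude via $\mu_4-3\mu_2^2=c_4\ge 0$. Your version is in fact slightly tidier, since you handle the non-symmetric case directly through translation invariance of the higher cumulants, whereas the paper restricts to the symmetric case and merely asserts the general one is similar.
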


\begin{proof} We consider only the case of a symmetric distribution. The general case is similar, but requires more difficult calculations. Let $X$ be a random variable with infinitely divisible distributions. We denote $f(t)$ its characteristic function and let $\varphi (t) = \log f(t)$. Since $X$ has a finite kurtosis, then there is a finite fourth moment of the distribution of  $X$, i.e. there exist derivatives of functions $f(t)$ and $\varphi(t)$ up to fourth order. Kolmogorov canonical representations shows that
\[  \varphi (t) = ict + \int_{-\infty}^{\infty} (e^{itx}-1-itx)\frac{d K(x)}{x^2}. \]
And it follows, that 
\[ \varphi^{\prime\prime}(t) =-\int_{-\infty}^{\infty} e^{itx} dK(x), \]
\[ \varphi^{(IV)}(0) =\int_{-\infty}^{\infty} x^2 dK(x) \geq 0. \]
And the equality in the last inequality is reached if and only if the distribution  $K$ is singular at point $x=0$. It is clear, that in this (and only this) case $f(t)$ is a characteristic function of normal distribution $\varphi^{(IV)}(0) = f^{(IV)}(0) -3( f^{\prime \prime})^2(0)$. Further, it is easy to verify that for symmetric distribution $\varphi^{(IV)}(0) = f^{(IV)}(0) -3( f^{\prime \prime})^2(0)$.
Thus, from the inequality $ \varphi^{(IV)}(0) > 0$, it follows 
\[   f^{(IV)}(0) -3 (f^{\prime \prime})^{2} (0) \>0.\]

If we denote $\mu_j$ the  $j$-th moment of a random variable $X$, then the preceding inequality can be written as \[ \mu_4 - 3\mu_2^2 >0. \]
\end{proof}

\end{document}